\documentclass[12pt]{amsart}

\usepackage{amscd}
\usepackage{a4wide}
\usepackage{amssymb}
\usepackage{amstext}
\usepackage{amsthm}
\usepackage{mathrsfs}

\linespread{1.0}

\usepackage{latexsym}
%\usepackage{graphicx}
%\usepackage{epsfig}
%\usepackage{epstopdf}
%\usepackage{showkeys}
%\DeclareGraphicsRule{.tif}{png}{.png}{`convert #1 `dirname
%#1`/`basename #1 .tif`.png}
\usepackage{xcolor}
%\usepackage[final]{hyperref}
%\hypersetup{unicode= false, colorlinks=true, linkcolor=blue,
%anchorcolor=blue, citecolor=green, filecolor=red, menucolor=blue,
%pagecolor=blue, urlcolor=blue}
%\linespread{1.2}
\numberwithin{equation}{section}

\DeclareMathOperator{\Hol}{Hol}

\newcommand{\DD}{{\mathbb D}}
\newcommand{\CC}{{\mathbb C}}

\renewcommand{\phi}{\varphi}

\newcommand{\ve}{{\varepsilon}}

\newtheorem{Thm}{Theorem}%[section]
\newtheorem{theorem}[Thm]{Theorem}
\newtheorem{lemma}[Thm]{Lemma}

\newtheorem{remark}[Thm]{Remark}

\begin{document}
\sloppy
\title[The Young type theorem in weighted Fock spaces]
{The Young type theorem in weighted Fock spaces}

\author{Anton Baranov, Yurii Belov, Alexander Borichev}
\address{Anton Baranov:
\newline Department of Mathematics and Mechanics, St.~Petersburg State University, St.~Petersburg, Russia
\newline {\tt anton.d.baranov@gmail.com}
\smallskip
\newline \phantom{x}\,\, Yurii Belov:
\newline St.~Petersburg State University, St. Petersburg, Russia
\newline {\tt j\_b\_juri\_belov@mail.ru}
\smallskip
\newline \phantom{x}\,\, Alexander Borichev:
\newline Aix-Marseille Universit\'e, CNRS, Centrale Marseille, I2M, Marseille, France
\newline {\tt alexander.borichev@math.cnrs.fr}
}

\thanks{The work was supported by Russian Science Foundation grant 14-41-00010.}

\keywords{Fock spaces, complete and minimal systems, biorthogonal systems}

\subjclass[2000]{Primary 30B60, Secondary 30D10, 30D15, 30H20, 42A63}

\begin{abstract} 
We prove that for every radial weighted Fock space, the system biorthogonal 
to a complete and minimal system of reproducing kernels is also
complete under very mild regularity assumptions on the weight.
This result generalizes a theorem by Young on reproducing kernels 
in the Paley--Wiener space and a recent result of Belov 
for the classical Bargmann--Segal--Fock space.
\end{abstract}

\maketitle

\section{Introduction and the main result}

Let $\mathcal{H}$ be a reproducing kernel 
Hilbert space whose elements are entire functions. 
This means that the evaluation functionals $f\mapsto f(\lambda)$
are continuous on $\mathcal{H}$ for every $\lambda\in \mathbb{C}$, 
and so there exist functions $k_\lambda \in \mathcal{H}$
({\it reproducing kernels}) such that
$$
f(\lambda) = \langle f, k_\lambda\rangle_{\mathcal{H}}, \qquad f\in \mathcal{H}.
$$
It is well known that
the structure properties of $\mathcal{H}$ are closely related to 
geometric properties of systems of reproducing kernels.
E.g., one of the simplest relations of this type 
says that a set $\Lambda = \{\lambda_n\}$ 
is a uniqueness set for $\mathcal{H}$ if and only if 
the system of reproducing kernels $\{k_\lambda\}_{\lambda \in \Lambda}$
is complete in $\mathcal{H}$.

We say that the space $\mathcal{H}$ has the {\it division property}
if  for any $f\in \mathcal{H}$ and any $w\in \mathbb{C}$ such that
$f(w) = 0$,  we have $\frac{f(z)}{z-w} \in \mathcal{H}$.
Most of the classical spaces of entire functions
(Paley--Wiener, de Branges, Fock spaces) possess this property.  

Now assume that a system $\{k_\lambda\}_{\lambda\in \Lambda}$ is complete
and {\it minimal} in $\mathcal{H}$ (i.e., $k_\lambda$ 
does not belong to the closed linear span 
of $\{k_\mu\}_{\mu\in \Lambda\setminus\{\lambda\}}$, $\lambda\in \Lambda$).
Fix $\lambda_0 \in\Lambda$ and let $g$ be a function in $\mathcal{H}$
which is orthogonal to $\{k_\mu\}_{\mu\in \Lambda\setminus\{\lambda_0\}}$
and so vanishes on $\Lambda\setminus\{\lambda_0\}$.
Then the function $G=(\cdot-\lambda_0) g$ (known as the 
{\it generating function} of the sequence 
$\{k_\lambda\}_{\lambda\in \Lambda}$ and unique up to a multiplicative constant) vanishes on $\Lambda$
and, by the division property, 
$\frac{G(z)}{z-\lambda} \in \mathcal{H}$ for any $\lambda\in \Lambda$.
Clearly, the functions 
\begin{equation}
\label{bior}
g_\lambda(z) = \frac{G(z)}{G'(\lambda) (z-\lambda)}
\end{equation}
form a system biorthogonal to $\{k_\lambda\}_{\lambda\in \Lambda}$,
i.e., $\langle k_\lambda, g_\mu\rangle = \delta_{\lambda \mu}$, $\lambda,\mu\in \Lambda$.

Given any function $f\in \mathcal{H}$, consider its (formal) Fourier
series with respect to the biorthogonal pair $\{k_\lambda\}_{\lambda\in \Lambda}$,
$\{g_\lambda\}_{\lambda\in \Lambda}$:
\begin{equation}
\label{lagr}
f\ \ \sim \ \ \sum_{\lambda \in \Lambda} \langle f, k_\lambda\rangle g_\lambda = 
     \sum_{\lambda \in \Lambda} f(\lambda)\frac{G}{G'(\lambda) (\cdot-\lambda)},
\end{equation}
which is nothing else than the Lagrange-type interpolation series for $f$.
The possibility of reconstruction of a function $f$ (in a given class)
from its Lagrange series \eqref{lagr}  is one of the 
basic problems of function theory. In this note we address one special 
instance of this problem, namely the question of completeness 
of the biorthogonal system $\{g_\lambda\}_{\lambda\in \Lambda}$.
Clearly, this is a necessary (though by no means sufficient)
condition for any reasonable reconstruction method. More delicate necessary 
conditions were studied in \cite{BBB1, BBB2}.

It is well known and easy to see that in general a system 
biorthogonal to a complete and minimal system in a Hilbert space 
need not to be complete. For special systems of functions 
the situation may be different. R.~Young \cite{young} showed that every complete 
and minimal system of exponentials $\{e^{i\lambda t}\}$ in $L^2(-\pi, \pi)$
(or, equivalently, any complete 
and minimal system of reproducing kernels in the Paley--Wiener space 
$PW_\pi$) has a complete biorthogonal system. This result was also 
obtained independently and by different methods 
by G.~Gubreev and A.~Kovalenko \cite{gub}. In \cite{bb11} 
the first two authors
studied completeness of biorthogonal systems in the context of 
de Branges spaces (or model subspaces of the Hardy space)
and showed that in these spaces there exist 
complete and minimal systems of reproducing kernels whose biorthogonal
system is incomplete with any given (even infinite) defect.

Recently, Yu.~Belov proved a Young-type result for the classical 
Bargmann--Segal--Fock space $\mathcal{F}$, see the definition below. 
The aim of this note is to extend this result to all radial 
weighted Fock spaces  satisfying very mild restrictions on the weight.
In particular, our method provides a simple proof for the classical Fock 
space $\mathcal{F}$. This method can also be easily modified to work in other
reproducing kernel Hilbert spaces of entire functions
(e.g., we show how to deduce the result of Young using this method).

Now we introduce the radial weighted Fock spaces. Let 
$h:\mathbb R_+\to\mathbb R_+$ be a continuous function.  
%such that $\log t = o(h(t))$, 
%$t\to\infty$. 

Extend $h$ to $\mathbb{C}$ by $h(z):= h(|z|)$, $z\in\mathbb{C}$. Consider the class of functions 
$$
\mathcal F_h=\Big\{f\in\Hol(\CC):\|f\|^2_{\mathcal F_h}=\int_{\CC}|f(z)|^2e^{-h(z)}\,dm_2(z)<\infty\Big\},
$$ 
$dm_2$ being planar Lebesgue measure. Each space $\mathcal F_h$ is a reproducing kernel 
Hilbert space with respect to the norm $\|\cdot\|_{\mathcal F_h}$. The classical Fock 
(or Bargmann--Segal--Fock) space corresponds to $h(r)= \pi r^2$.

Our main result is the following theorem: 

\begin{theorem} \label{th1} 
Let $h: \mathbb R_+\to\mathbb R_+$ be a function 
in $C^2(\mathbb R_+)$ such that 
\begin{equation}
\log(t+|h'(t)|+|h''(t)|)=o(h(t)),\qquad t\to\infty. \label{sta}
\end{equation}
Let $\{k_\lambda\}_{\lambda\in\Lambda}$ be a complete and minimal system 
of reproducing kernels in $\mathcal F_h$. Then the system
$\{g_\lambda\}_{\lambda\in\Lambda}$ biorthogonal to the system 
$\{k_\lambda\}_{\lambda\in\Lambda}$ is complete in $\mathcal F_h$. 
\label{mt}
\end{theorem}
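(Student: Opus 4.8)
The strategy is to argue by contradiction: suppose there is a nonzero $\phi\in\mathcal F_h$ orthogonal to every $g_\lambda$, $\lambda\in\Lambda$. Writing $G$ for the generating function of $\{k_\lambda\}$, the orthogonality $\langle \phi, g_\lambda\rangle=0$ together with \eqref{bior} means precisely that the Fourier coefficient $\langle\phi,g_\lambda\rangle$ vanishes, i.e. the Lagrange-type series \eqref{lagr} of $\phi$ is identically zero. The key point is that this should force $\phi/G$ to be an entire function of very slow growth (in fact, with the right normalization, $\phi=cG$ for a constant $c$, or $\phi/G$ polynomial), which will contradict either the minimality of $\{k_\lambda\}$ or the membership $\phi\in\mathcal F_h$. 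Concretely, the plan is to show that $\phi/G$ extends to an entire function and to control its growth via a contour-integral (residue) representation built from the reproducing kernels of $\mathcal F_h$.

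The main tool will be precise asymptotics for the reproducing kernel $k_w$ of $\mathcal F_h$ and for $\|k_w\|$. Under a radial weight with the regularity \eqref{sta}, one has the standard estimate $\|k_w\|^2_{\mathcal F_h}\asymp \Delta h(w)\, e^{h(w)}$ (up to the subexponential factors allowed by \eqref{sta}), and more importantly an \emph{off-diagonal} decay estimate of the form $|k_w(z)|\le C\|k_w\|\,\|k_z\|\,e^{-c\,\text{dist-type quantity}}$ away from the diagonal; these are the analogues of the Gaussian estimates in the classical Fock case and are where the hypothesis \eqref{sta} enters. I would first establish (or quote in the form needed) these kernel asymptotics, phrased so that $e^{h(z)/2}/\|k_z\|$ is subexponential. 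The point of \eqref{sta} is exactly that all the auxiliary quantities ($|h'|$, $|h''|$, hence $\Delta h$) are $e^{o(h)}$, so they do not interfere with exponential-order comparisons; this lets one treat $\mathcal F_h$, for growth-of-entire-function purposes, essentially like the classical Fock space with weight $h$.

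Next, following Belov's approach for the classical case, I would represent a function orthogonal to all $g_\lambda$ by a sum/integral over $\Lambda$ and recognize it, after multiplying by $1/G$, as having no poles: since $\langle\phi,g_\lambda\rangle=\phi(\lambda)/G'(\lambda)=0$ would be too strong, the correct statement is that $\phi$ vanishes on $\Lambda\setminus\{\lambda_0\}$ is \emph{not} assumed — rather, we use that the function $g=G/((\cdot-\lambda_0)G'(\lambda_0))$ spans the orthogonal complement structure, and a function orthogonal to all $g_\lambda$ must pair trivially against all of them. I would then form the function $F=\phi\,\overline{G}$-type combination, or more cleanly, use the reproducing formula $\phi(z)=\langle\phi,k_z\rangle$ and expand $k_z$ (or rather $1$) against the biorthogonal system: writing $1\sim\sum_\lambda \overline{G(z)}\,(\ldots)$ leads, via \eqref{lagr} applied carefully, to the identity that $\phi/G$ equals a globally convergent series of simple fractions whose sum one estimates by the kernel bounds from the previous step. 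The upshot should be that $|\phi(z)/G(z)|$ is bounded by something of order $e^{o(h(z))}\cdot(\text{polynomial})$, while $\phi\in\mathcal F_h$ and $G$ is (up to a polynomial factor) extremal so that $|G(z)|$ can be made comparably large along a sequence; a Phragmén–Lindelöf / minimal-type-function argument then forces $\phi/G$ to be a polynomial of controlled degree, and finally the minimality of $\{k_\lambda\}$ (equivalently: $g=G/(\cdot-\lambda_0)\in\mathcal F_h$ but $G\notin\mathcal F_h$, or a dimension count) rules out all such $\phi$ except $\phi=0$.

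The main obstacle, I expect, is the growth control in the third step: turning the vanishing of the Lagrange series into a genuine pointwise bound on $\phi/G$ that is strong enough to conclude. In the classical Fock space this is handled by explicit Gaussian kernel estimates and summation of $\sum 1/|z-\lambda|$ against the known density of zero sets; in the general radial case one must replace these by the \eqref{sta}-regularized kernel asymptotics and a Jensen-type bound on the counting function of $\Lambda$ (from $G\in$ a space comparable to $\mathcal F_h$), and then show the error terms are all $e^{o(h)}$ so the comparison argument still closes. A secondary technical point is justifying convergence and the interchange of summation needed to identify $\phi/G$ with the simple-fraction series; this is where one uses that $\phi\perp g_\lambda$ for \emph{all} $\lambda$, including $\lambda_0$, to kill the would-be pole at $\lambda_0$ as well.
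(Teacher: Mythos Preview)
Your proposal has a genuine gap at its core: there is no reason for $\phi/G$ to be entire. Orthogonality of $\phi$ to the $g_\lambda$ says nothing about the values $\phi(\lambda)$; you have in fact conflated two different expansions. The Lagrange series \eqref{lagr} of $\phi$ has coefficients $\langle\phi,k_\lambda\rangle=\phi(\lambda)$, and these do \emph{not} vanish. What vanishes is the dual expansion $\sum_\lambda\langle\phi,g_\lambda\rangle k_\lambda$, which carries no direct information about divisibility of $\phi$ by $G$. Your later attempt to ``expand $k_z$ (or rather $1$) against the biorthogonal system'' and thereby identify $\phi/G$ with a simple-fraction series is not a well-defined step: $1\notin\mathcal F_h$ in general, and even formally such an expansion does not produce a representation of $\phi/G$. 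The whole Phragm\'en--Lindel\"of endgame is therefore floating on an unproved (and generally false) premise.

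The paper avoids this difficulty entirely by never looking at $\phi/G$. Instead, from the hypothetical $H\perp\{g_\lambda\}$ it \emph{constructs} a new function
\[
Q(z)=\int_{\mathbb C}\frac{G(z)P(\zeta)-G(\zeta)P(z)}{z-\zeta}\,\overline{H(\zeta)}\,e^{-h(\zeta)}\,dm_2(\zeta),
\]
with $P$ a suitably chosen polynomial. The symmetrized difference quotient is in $\mathcal F_h$ for each fixed $z$, and the very orthogonality $H\perp g_\lambda$ forces $Q(\lambda)=0$ for every $\lambda\in\Lambda$; the polynomial $P$ is chosen so that $Q\not\equiv 0$. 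The remaining work is to show $Q\in\mathcal F_h$, which follows from two elementary Cauchy-transform estimates (the paper's Lemmas~\ref{le5} and~\ref{le4}) combined with the growth bound of Lemma~\ref{le3}. One then has a nonzero element of $\mathcal F_h$ vanishing on $\Lambda$, contradicting completeness of $\{k_\lambda\}$. No off-diagonal kernel asymptotics, density estimates for $\Lambda$, or Phragm\'en--Lindel\"of arguments are needed; the hypothesis \eqref{sta} enters only through the pointwise bound on $|f|+|f'|$ in Lemma~\ref{le3}.
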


We do not consider the case of finite dimensional Fock spaces $\mathcal F_h$, $\log(t)\not=o(h(t))$, 
$t\to\infty$.
\bigskip

%%%%%%%%%%%%%%%%%%%%%%%%%%%%%%%%%%%%%%%%%%%%%%

\section{Preliminaries}

In this section we establish some standard growth estimates for the elements
of the space $\mathcal F_h$. 
%Without loss of generality we will assume that
%$\log(t)=o(h(t))$, otherwise $\mathcal F_h$ is finite-dimensional and 
%Theorem \ref{mt} becomes trivial. 
In what follows we denote by $D(z, r)$ 
the open disc with center $z$ of radius $r$
and we write $\mathbb{D}$ instead of $D(0,1)$.

\begin{lemma} Given $\ve>0$, we have
$$
|h(w)-h(z)|=o(1),\qquad w\in D(z,e^{-\ve h(z)}),\, |z|\to\infty.
$$
\end{lemma}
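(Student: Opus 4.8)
The plan is to use that $h$ is radial and the one-variable mean value theorem to reduce the claim to a bound for $|h'|$ over the short interval of radii $[|z|-r,|z|+r]$, where $r:=e^{-\ve h(z)}$, and then to control $\sup|h'|$ on that interval by a bootstrap argument based on \eqref{sta}. First note that \eqref{sta} forces $\log t=o(h(t))$, hence $h(t)\to\infty$ as $t\to\infty$; in particular $r<1$ once $|z|$ is large. For $w\in D(z,r)$ we have $\bigl||w|-|z|\bigr|\le|w-z|<r$ and $h\in C^1$ near $|z|$, so the mean value theorem gives
\[
|h(w)-h(z)|=\bigl|h(|w|)-h(|z|)\bigr|\le r\cdot M_z,\qquad M_z:=\sup_{|t-|z||\le r}|h'(t)|.
\]
It therefore suffices to prove $M_z\le e^{(\ve/2)h(z)}$ for $|z|$ large, since then $|h(w)-h(z)|\le e^{-\ve h(z)}e^{(\ve/2)h(z)}=e^{-(\ve/2)h(z)}\to 0$, uniformly in $w\in D(z,r)$.

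To estimate $M_z$, fix $\eta:=\ve/4$ and, using \eqref{sta}, pick $S$ with $|h'(t)|\le e^{\eta h(t)}$ for all $t\ge S$; assume $|z|$ is large enough that $|z|-1\ge S$. For $\rho\in[0,r]$ set $P(\rho):=\max_{|t-|z||\le\rho}h(t)$, which is continuous and nondecreasing in $\rho$ with $P(0)=h(z)$. From $h(t)=h(z)+\int_{|z|}^{t}h'(s)\,ds$ and the bound $|h'(s)|\le e^{\eta h(s)}\le e^{\eta P(\rho)}$, valid on $[|z|-\rho,|z|+\rho]$, we get
\[
P(\rho)\le h(z)+\rho\,e^{\eta P(\rho)},\qquad \rho\in[0,r].
\]
Suppose, for contradiction, that $P(r)>2h(z)$. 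By the intermediate value theorem there is $\rho^*\in(0,r]$ with $P(\rho^*)=2h(z)$, and then
\[
2h(z)=P(\rho^*)\le h(z)+\rho^* e^{2\eta h(z)}\le h(z)+e^{-\ve h(z)}e^{(\ve/2)h(z)}=h(z)+e^{-(\ve/2)h(z)},
\]
so $h(z)\le e^{-(\ve/2)h(z)}$, which is false once $h(z)$ is large. Hence $P(r)\le 2h(z)$, and consequently $M_z\le e^{\eta P(r)}\le e^{2\eta h(z)}=e^{(\ve/2)h(z)}$, as needed.

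The only genuinely delicate point is this bootstrap: a priori the bound for $h'$ near $z$ depends on the size of $h$ near $z$, which in turn is controlled only through $h'$, so the estimate looks circular. The monotonicity and continuity of $\rho\mapsto P(\rho)$ break the circle, and the strict inequality $2\eta<\ve$ is exactly what makes the resulting error term $e^{-(\ve/2)h(z)}$ negligible against $h(z)$. Note that only the $|h'|$ part of hypothesis \eqref{sta} is used here; the term $|h''|$ will enter the later estimates.
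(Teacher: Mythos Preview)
Your proof is correct and follows essentially the same approach as the paper: both use the mean value theorem together with condition \eqref{sta} and a first-exit argument to control $|h'|$ on the short radial interval near $|z|$. The paper's version is more compressed---it fixes $\delta>0$, takes the point $s$ in $[t-e^{-\ve h(t)},t+e^{-\ve h(t)}]$ closest to $t=|z|$ with $|h(s)-h(t)|\ge\delta$, and applies the mean value theorem directly on $[s,t]$ (where $h$ is automatically within $\delta$ of $h(t)$, so $|h'|$ is bounded by $Ce^{(\ve/2)h(t)}$)---whereas you formalize the same mechanism via the continuous auxiliary function $P(\rho)$.
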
 

\begin{proof} Fix $\delta>0$. For large $t>0$ denote by $s$ a point in the interval 
$[t-\exp(-\ve h(t)),t+\exp(-\ve h(t))]$ closest to $t$ and such that $|h(s)-h(t)|\ge \delta$ 
(if such points exist at all; otherwise, there is nothing to prove).

Then by \eqref{sta} for some point $s'$ between $s$ and $t$ we have 
$$
\delta\le |h(s)-h(t)|\le |s-t|\cdot |h'(s')|\le C \exp(-\ve h(t)) \exp((\ve/2) h(t)),
$$
which is impossible for large $t$.
\end{proof}

\begin{lemma} 
\label{le3}
Given $\ve>0$ there exists $C_\ve>0$ such that 
$$
|f(z)|+|f'(z)|\le C_\ve e^{((1/2)+\ve)h(z)}\|f\|_{\mathcal F_h},\qquad f\in \mathcal F_h, \ z\in\CC.
$$
\end{lemma}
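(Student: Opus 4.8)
The plan is to use the submean value property of $|f|^2$ (and $|f'|^2$) together with the local near-constancy of the weight established in the previous lemma. Fix $\ve>0$ and put $\ve'=\ve/2$. First I would observe that $f$ is holomorphic on $D(z,\rho_z)$ with $\rho_z=e^{-\ve' h(z)}$, so by the mean value property
$$
|f(z)|^2\le \frac{1}{\pi\rho_z^2}\int_{D(z,\rho_z)}|f(w)|^2\,dm_2(w).
$$
By the preceding lemma, $e^{-h(w)}\ge c\,e^{-h(z)}$ for all $w\in D(z,\rho_z)$ once $|z|$ is large enough (with $c$ close to $1$), hence
$$
|f(z)|^2\le \frac{C}{\rho_z^2}\,e^{h(z)}\int_{D(z,\rho_z)}|f(w)|^2 e^{-h(w)}\,dm_2(w)
\le C\,e^{2\ve' h(z)}\,e^{h(z)}\,\|f\|_{\mathcal F_h}^2,
$$
which gives $|f(z)|\le C_\ve\, e^{((1/2)+\ve')h(z)}\|f\|_{\mathcal F_h}$, and this is the claimed bound for $|f|$ with room to spare (since $\ve'<\ve$). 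For small $|z|$ the estimate is trivial by compactness, after adjusting the constant.

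Next I would handle $f'$. There are two natural routes: either apply the Cauchy integral formula for $f'$ on a slightly smaller disc $D(z,\rho_z/2)$, bounding $|f'(z)|\le \frac{2}{\rho_z}\sup_{|w-z|=\rho_z/2}|f(w)|$ and then inserting the pointwise bound on $|f|$ just obtained (at the point $w$, where $h(w)=h(z)+o(1)$, so the exponential factor is unchanged); the extra factor $1/\rho_z=e^{\ve' h(z)}$ is again harmless because we have the slack between $\ve'$ and $\ve$. Alternatively, one applies the submean value property directly to $|f'|^2$ on $D(z,\rho_z)$ and repeats the argument above verbatim. Either way one obtains $|f'(z)|\le C_\ve\,e^{((1/2)+\ve)h(z)}\|f\|_{\mathcal F_h}$, and adding the two estimates finishes the proof.

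The only mildly delicate point is the bookkeeping of the $\ve$'s: the radius $\rho_z=e^{-\ve' h(z)}$ contributes a factor $e^{2\ve' h(z)}$ from $\rho_z^{-2}$ (and an extra $e^{\ve' h(z)}$ in the derivative estimate), so one must start with $\ve'$ strictly smaller than $\ve$ — taking $\ve'=\ve/3$, say, absorbs all losses. I do not expect a genuine obstacle here; the one thing to be careful about is that the previous lemma is stated for radii $e^{-\ve h(z)}$ for \emph{every} fixed $\ve>0$, so it applies in particular to the radius $\rho_z$ we have chosen, making the near-constancy of the weight on $D(z,\rho_z)$ legitimate.
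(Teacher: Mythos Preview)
Your argument is correct and takes a genuinely simpler route than the paper's. The paper does \emph{not} invoke Lemma~2 here; instead it rescales $D(z,\gamma)$ to $\mathbb{D}$, sets $\psi(w)=h(z+\gamma w)$, and uses the control on $h''$ (through $\Delta\psi=o(1)$) to write $\psi=\psi_1+\rho$ with $\psi_1$ harmonic and $\rho$ a negligible potential. This produces a genuinely holomorphic function $F(w)=f(z+\gamma w)\,e^{-(\psi_1+i\widetilde{\psi_1})/2}$ on $\mathbb{D}$ whose $L^2$ norm is controlled by $\gamma^{-1}\|f\|_{\mathcal F_h}$, and then standard Bergman estimates give both $|F(0)|$ and $|F'(0)|$. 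Your approach sidesteps this ``local holomorphic peeling'' entirely: since $|f|^2$ is already subharmonic, the submean inequality together with the near-constancy of $h$ from Lemma~2 is enough, and for $f'$ the Cauchy estimate on a disc of radius $\rho_z/2$ closes the argument. The paper's method is the standard local-frame technique from weighted Fock/Bergman theory and is more robust (it is what one would use to get sharp constants or to handle weights not satisfying Lemma~2), but for the present statement your argument is shorter and uses less of hypothesis~\eqref{sta}.

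One caveat: your second alternative for $f'$---applying the submean inequality to $|f'|^2$ and ``repeating the argument verbatim''---does not work as written, because the verbatim argument would end with $\int_{D(z,\rho_z)}|f'(w)|^2e^{-h(w)}\,dm_2(w)$, and you have no bound on this in terms of $\|f\|_{\mathcal F_h}$. Drop that alternative and keep the Cauchy-estimate route, which is clean; your bookkeeping with $\ve'=\ve/3$ is then exactly right.
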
 

\begin{proof} (Compare to the proof of Lemma~4.1 in \cite{jfa}.) 
Given $z\in\CC$, $\beta>0$, set $\gamma=\exp(-\beta h(z))$, 
$$
\psi(w)=h(z+\gamma w),\qquad w\in\DD.
$$
Then by \eqref{sta},
\begin{multline*}
\|\nabla \psi(w)\|+|\Delta \psi(w)|
\\ \lesssim \gamma|h'(|z+\gamma w|)|+\gamma^2\Bigl(|h''(|z+\gamma w|)|+
\frac{|h'(|z+\gamma w|)|}{|z+\gamma w|}\Bigr)=o(1)
\end{multline*}
uniformly in $w\in \DD$ when $|z|\to\infty$. Here and later on we write $a\lesssim b$ if $a\le Cb$ for some constant $C$. Set 
$$
\rho(z)=\int_{\DD}\log\Bigl|\frac{z-w}{1-\bar wz}\Bigr|\Delta \psi(w)\,dm_2(w),\qquad z\in \DD.
$$
Then
$$
|\rho(w)|+\|\nabla \rho(w)\|=o(1),\qquad 
w\in \DD,\,|z|\to\infty,
$$
and $\psi_1=\psi-\rho$ is real and harmonic on $\DD$. Denote by $\widetilde{\psi_1}$ the harmonic conjugate of $\psi_1$ and set 
$$
F(w)=f(z+\gamma w)e^{-(\psi_1(w)+i\widetilde{\psi_1}(w))/2},\qquad 
w\in \DD.
$$
Then $F$ is analytic on $\DD$ and 
$$
\int_\DD |F(w)|^2\,dm_2(w)\lesssim \gamma^{-2}\int_{D(z,\gamma)}|f(\zeta)|^2e^{-h(\zeta)}\,dm_2(\zeta)\lesssim \gamma^{-2}\|f\|^2_{\mathcal F_h}.
$$
Therefore,
\begin{align*}
|f(z)|&\lesssim |F(0)|e^{h(z)/2}\lesssim \gamma^{-1}e^{h(z)/2}\|f\|_{\mathcal F_h},\\
|f'(z)|&\lesssim  \gamma^{-1}|F'(0)|e^{h(z)/2}+ \gamma^{-1}
\|\nabla\psi_1(0)\||F(0)|e^{h(z)/2}\\ 
&\qquad\qquad\qquad\qquad\qquad\qquad\qquad\lesssim \gamma^{-2}e^{h(z)/2}\|f\|_{\mathcal F_h}.
\end{align*}
\end{proof}

%%%%%%%%%%%%%%%%%%%%%%%%%%%%%%%%%%%%%%%%%%%%%%%

\section{Proof of Theorem~\ref{th1}}

Fix a complete and minimal system $\{k_\lambda\}_{\lambda\in\Lambda}$ in $\mathcal F_h$. 
Its biorthogonal system
is given by \eqref{bior}, where the generating function $G$ 
has simple zeros exactly at $\Lambda$. Suppose that the system
$\{g_\lambda\}_{\lambda\in\Lambda}$ is not complete and 
fix $H\in \mathcal F_h\setminus\{0\}$ orthogonal 
to this system. Since polynomials are dense in any radial Fock space, we can
choose $n\ge 0$ such that 
$\langle H,z^n\rangle\not=0$ and fix a polynomial $P$ of degree $n+1$ 
with simple zeros $w_1,\ldots,w_{n+1}$ in $\CC\setminus\Lambda$. 

Given $z\in\CC$, the function 
$$
\zeta\mapsto \frac{G(z)P(\zeta)-G(\zeta)P(z)}{z-\zeta}
$$
is in $\mathcal F_h$. Set
$$
Q(z)=\int_\CC  \frac{G(z)P(\zeta)-G(\zeta)P(z)}{z-\zeta} \,\overline{H(\zeta)}\,e^{-h(\zeta)}\,dm_2(\zeta),\qquad z\in\CC.
$$
Then $Q$ is an entire function vanishing at $\Lambda$, and hence, 
$$
Q=GT
$$
for an entire function $T$. For every zero $w_j$ of $P$ we have 
$$
Q(w_j)=\int_\CC  \frac{G(w_j)P(\zeta)}{w_j-\zeta} \,\overline{H(\zeta)}\,e^{-h(\zeta)}\,dm_2(\zeta).
$$
Therefore, if $T\equiv 0$, then $H$ is orthogonal to all $P/(w_j-\cdot)$, and hence to 
$z^n$, which is impossible. 
To complete the proof it remains to verify that $Q\in \mathcal F_h$. 

We have 
\begin{multline}
\label{qqq}
Q(z)=G(z)\int_\CC  \frac{P(\zeta)\overline{H(\zeta)}}{z-\zeta} 
\,e^{-h(\zeta)}\,dm_2(\zeta) \\ -
P(z)\int_\CC  \frac{G(\zeta)\overline{H(\zeta)}}{z-\zeta}\,e^{-h(\zeta)}\,dm_2(\zeta)\\
\stackrel{\rm def}{=}
A_1(z)+A_2(z).
\end{multline}
The inclusion $Q\in \mathcal F_h$ will be a consequence 
of the following two statements.

\begin{lemma} \label{le5} Let $\phi\in L^1(\CC)\cap C^1(\CC)$, $\|\phi\|_{L^1(\CC)}\lesssim 1$. 
Suppose that 
$$
|\phi(z)|+\|\nabla \phi (z)\|\lesssim (1+|z|)^{-3},\qquad z\in\CC.  
$$
Then
$$
\Bigl| \int_\CC \frac{\phi(\zeta)\,dm_2(\zeta)}{z-\zeta}\Bigr|
\lesssim (1+|z|)^{-1},\qquad z\in\CC.
$$
\end{lemma}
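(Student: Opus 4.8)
The plan is to control the Cauchy transform $\Phi(z):=\int_\CC \frac{\phi(\zeta)}{z-\zeta}\,dm_2(\zeta)$ by splitting $\CC$ into a disc centred at $z$ and its complement, the radius of the disc being adapted to $|z|$. On the disc we shall use that $\phi$ is small there — every point of a disc of radius $\sim|z|$ about $z$ is far from the origin, where $\phi$ decays; on the complement we shall use that $|z-\zeta|^{-1}$ is small there, together with $\phi\in L^1(\CC)$. In either regime the only planar integral that occurs is $\int_{|w|<r}|w|^{-1}\,dm_2(w)=2\pi r$, which already shows in passing that $\Phi(z)$ is well defined.

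Concretely, I would first dispose of the range $|z|\le 2$, where $(1+|z|)^{-1}\asymp 1$: splitting at $|\zeta-z|=1$, the part over $\{|\zeta-z|<1\}$ is at most $\bigl(\sup_\CC|\phi|\bigr)\int_{|w|<1}|w|^{-1}\,dm_2(w)\lesssim 1$ by the pointwise bound $|\phi|\lesssim 1$, while the part over $\{|\zeta-z|\ge 1\}$ is at most $\|\phi\|_{L^1(\CC)}\lesssim 1$. For $|z|\ge 2$ I would split at $|\zeta-z|=|z|/2$, writing $\Phi(z)=I_1+I_2$. For $I_2$, the integral over $\{|\zeta-z|\ge|z|/2\}$, one has $|z-\zeta|^{-1}\le 2/|z|$, hence $|I_2|\le (2/|z|)\|\phi\|_{L^1(\CC)}\lesssim |z|^{-1}$. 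For $I_1$, the integral over $\{|\zeta-z|<|z|/2\}$, every such $\zeta$ satisfies $|\zeta|\ge|z|/2$, so $|\phi(\zeta)|\lesssim(1+|\zeta|)^{-3}\lesssim|z|^{-3}$, and therefore $|I_1|\le |z|^{-3}\int_{|w|<|z|/2}|w|^{-1}\,dm_2(w)\lesssim |z|^{-3}\cdot|z|=|z|^{-2}$. Combining the two cases yields $|\Phi(z)|\lesssim(1+|z|)^{-1}$ for every $z\in\CC$.

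There is no genuine difficulty here; the one point that has to be got right is the scaling of the decomposition. The disc about $z$ on which one invokes the smallness of $\phi$ must have radius comparable to $|z|$, not a fixed radius: with a fixed radius one still gains a factor $|z|^{-3}$ from that region, but is then left with an $O(1)$ contribution from the complement, since $|z-\zeta|^{-1}$ is no longer small there. With the radius taken proportional to $|z|$ the two bounds $|I_1|\lesssim|z|^{-2}$ and $|I_2|\lesssim|z|^{-1}$ both meet or beat the target. I would also note that only the $L^1$ bound and the pointwise decay $|\phi(z)|\lesssim(1+|z|)^{-3}$ actually enter the argument; the hypothesis on $\nabla\phi$ is not needed for this particular estimate.
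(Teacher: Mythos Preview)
Your argument is correct. The paper's proof takes a slightly different route: for $|z|\ge 2$ it splits the plane into \emph{three} pieces, $D(0,|z|/2)$, $D(z,1)$, and the complement of their union. On $D(0,|z|/2)$ the kernel $|z-\zeta|^{-1}$ is $\lesssim|z|^{-1}$ and the $L^1$ bound on $\phi$ applies; on the complement of the two discs one has $|\zeta|\ge|z|/2$ and $|z-\zeta|\ge 1$, giving $\int_{|z|/2}^\infty t^{-2}\,dt\lesssim|z|^{-1}$; and on the small disc $D(z,1)$ the paper subtracts $\phi(z)$ (using $\int_{D(z,1)}(z-\zeta)^{-1}\,dm_2(\zeta)=0$) and then invokes the gradient bound to control $|\phi(\zeta)-\phi(z)|/|z-\zeta|$. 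Your two-region split with radius $|z|/2$ about $z$ is cleaner: the integrable singularity $\int_{|w|<r}|w|^{-1}\,dm_2(w)=2\pi r$ absorbs the near-diagonal part without any differencing, and, as you observe, the hypothesis on $\nabla\phi$ is never used. So your proof is a genuine simplification, establishing the lemma under weaker assumptions; the paper's detour through the gradient appears to be an unnecessary complication here (the gradient estimate is genuinely needed only in the companion Lemma~\ref{le4}).
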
 

\begin{lemma} \label{le4} Let $\alpha>0$ and suppose that $\psi\in L^1(\CC)\cap C^1(\CC)$, 
$\|\psi\|_{L^1(\CC)}\lesssim 1$, 
$\|\nabla \psi\|\lesssim e^{\alpha h}$ on $\mathbb C$. Then  
$$
\Bigl| \int_\CC \frac{\psi(\zeta)\,dm_2(\zeta)}{z-\zeta}\Bigr|\lesssim e^{\alpha h(z)},\qquad z\in\CC.
$$
\end{lemma}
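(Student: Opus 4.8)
The plan is to treat the Cauchy-type integral $I(z)=\int_\CC \frac{\psi(\zeta)}{z-\zeta}\,dm_2(\zeta)$ by splitting the domain of integration into a small disc around $z$ and its complement, with the radius of the disc chosen to depend on $z$ in the standard Fock-space way, namely $r=r(z)=e^{-\beta h(z)}$ for a suitably small $\beta>0$ (comparable to, or a fixed fraction of, $\alpha$). On the complement $\CC\sm D(z,r)$ the kernel satisfies $|z-\zeta|^{-1}\le r^{-1}=e^{\beta h(z)}$, so that part of the integral is bounded by $e^{\beta h(z)}\|\psi\|_{L^1(\CC)}\lesssim e^{\beta h(z)}\le e^{\alpha h(z)}$. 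The near part $\int_{D(z,r)}\frac{\psi(\zeta)}{z-\zeta}\,dm_2(\zeta)$ is where the gradient hypothesis enters.

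For the near part I would first replace $\psi$ by $\psi(z)$ plus a remainder: write $\psi(\zeta)=\psi(z)+\big(\psi(\zeta)-\psi(z)\big)$. The constant term contributes $\psi(z)\int_{D(z,r)}\frac{dm_2(\zeta)}{z-\zeta}=0$ by the mean value / symmetry of the Cauchy kernel over a disc centered at $z$. For the remainder, the gradient bound $\|\nabla\psi\|\lesssim e^{\alpha h}$ combined with Lemma~1 (which gives $|h(\zeta)-h(z)|=o(1)$ for $\zeta\in D(z,e^{-\ve h(z)})$, applicable since $r=e^{-\beta h(z)}\le e^{-\ve h(z)}$ once $\beta$ is chosen large enough relative to the auxiliary $\ve$ — here one should pick the parameters so that the disc on which Lemma~1 applies contains $D(z,r)$) yields $|\psi(\zeta)-\psi(z)|\le |\zeta-z|\cdot\sup_{D(z,r)}\|\nabla\psi\|\lesssim |\zeta-z|\, e^{\alpha h(z)}$ for $|z|$ large. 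Hence
\begin{equation*}
\Bigl|\int_{D(z,r)}\frac{\psi(\zeta)-\psi(z)}{z-\zeta}\,dm_2(\zeta)\Bigr|
\lesssim e^{\alpha h(z)}\int_{D(z,r)}\,dm_2(\zeta)\lesssim r^2 e^{\alpha h(z)}\le e^{\alpha h(z)}.
\end{equation*}
Adding the two parts gives $|I(z)|\lesssim e^{\alpha h(z)}$ for $|z|$ large, and for $z$ in any fixed compact set the bound is trivial since $\psi\in C^1\cap L^1$ makes $I$ continuous (indeed the Cauchy transform of an $L^1\cap L^\infty$ function is bounded on compacts), so the estimate holds for all $z\in\CC$ after adjusting the implied constant.

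The main obstacle, and the only place requiring care, is the bookkeeping of the two small parameters: the exponent $\beta$ in the radius $r=e^{-\beta h(z)}$ and the $\ve$ needed to invoke Lemma~1. One must choose $\beta$ so that simultaneously (i) $r^2 e^{\alpha h(z)}\le e^{\alpha h(z)}$, which is automatic, (ii) $e^{\beta h(z)}\le e^{\alpha h(z)}$, i.e. $\beta\le\alpha$, for the far part, and (iii) $D(z,r)\subset D(z,e^{-\ve h(z)})$ for the $\ve$ supplied by Lemma~1, i.e. $\beta\ge\ve$; picking, say, $\ve=\alpha/2$ and $\beta=\alpha/2$ reconciles all three. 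No use of the hypothesis \eqref{sta} beyond what is already packaged in Lemma~1 is needed, and the argument is essentially a one-parameter scaling estimate.
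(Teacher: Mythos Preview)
Your argument is correct and is essentially the paper's own proof: the same near/far decomposition at radius $e^{-\beta h(z)}$, the same vanishing of $\int_{D(z,r)}(z-\zeta)^{-1}\,dm_2$ to kill the constant term, and the same use of the gradient bound (together with the $o(1)$ oscillation of $h$ from the first preliminary lemma) on the near part. The only cosmetic difference is that the paper takes $\beta=\alpha$ directly (so the near part comes out $\lesssim 1$) rather than introducing a separate parameter.
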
 

\begin{proof}[Proof of Lemma~\ref{le5}] Let $z\in\CC$, $|z|\ge 2$. We have
\begin{gather*}
\Bigl| \int_\CC \frac{\phi(\zeta)\,dm_2(\zeta)}{z-\zeta}\Bigr|\le \Bigl| \int_{D(0,|z|/2)} \frac{\phi(\zeta)\,dm_2(\zeta)}{z-\zeta}\Bigr|
+\Bigl| \int_{D(z,1)} \frac{\phi(\zeta)\,dm_2(\zeta)}{z-\zeta}\Bigr| \\
+\Bigl| \int_{\CC\setminus(D(0,|z|/2)\cup D(z,1))} \frac{\phi(\zeta)\,dm_2(\zeta)}{z-\zeta}\Bigr|\\ \lesssim 
\frac{1}{1+|z|}+\int_{D(z,1)} \Bigl| \frac{\phi(\zeta)-\phi(z)}{z-\zeta}\Bigr| \,dm_2(\zeta)
+ \int_{|z|/2} \frac{dt}{t^2}
\lesssim \frac{1}{1+|z|}.
\end{gather*}
\end{proof}

\begin{proof}[Proof of Lemma~\ref{le4}] Let $z\in\CC$. Set $\eta=e^{-\alpha h(z)}$. We have
\begin{multline*}
\Bigl| \int_\CC \frac{\psi(\zeta)\,dm_2(\zeta)}{z-\zeta}\Bigr|\\ 
\le \Bigl| \int_{\CC\setminus D(z,\eta)} 
\frac{\psi(\zeta)\,dm_2(\zeta)}{z-\zeta}\Bigr|
+\Bigl| \int_{D(z,\eta)} \frac{\psi(\zeta)-\psi(z)}{z-\zeta}\,dm_2(\zeta)
\Bigr|\\ 
\lesssim e^{\alpha h(z)}+1\lesssim e^{\alpha h(z)}.
\end{multline*}
Here we use the fact that $\int_{D(z,r)} (z-\zeta)^{-1} dm_2(\zeta) =0$
for any disc $D(z, r)$.
\end{proof}

To complete the proof of Theorem~\ref{th1}, we apply Lemmas~\ref{le5} and \ref{le4} 
and estimate $A_1$ and $A_2$ in the right hand side of \eqref{qqq}. By Lemma~\ref{le3}, 
the function $\phi=P\overline{H}e^{-h}$ satisfies the conditions of Lemma~\ref{le5}, and hence 
$$
\Big| G(z)\int_\CC  \frac{P(\zeta)\overline{H(\zeta)}}{z-\zeta} 
\,e^{-h(\zeta)}\,dm_2(\zeta)\Big| \lesssim \frac{|G(z)|}{1+|z|}.
$$
Since $G/(1+|\cdot|) \in L^2(e^{-h}dm_2)$, we obtain that $A_1\in L^2(e^{-h}dm_2)$.

Next, choose $\lambda\in \Lambda$ and set $\psi=G\overline{H}e^{-h}/(\cdot-\lambda)$.  
By Lemma \ref{le3}, $\|\nabla \psi\|\lesssim e^{\alpha h}$ on $\mathbb C$ for every $\alpha>0$. 
Furthermore, $\psi\in L^1(\mathbb C)$. 
Therefore, by Lemma~\ref{le4}, 
$$
\Big|\int_\CC\frac{\psi(z)}{z-\zeta}\,dm_2(\zeta)\Big| \lesssim e^{\alpha h(z)},\qquad z\in\mathbb C,
$$
for every $\alpha>0$. 
Next, 
\begin{multline*}
\int_\CC  \frac{G(\zeta)\overline{H(\zeta)}}{z-\zeta}
\,e^{-h(\zeta)}\,dm_2(\zeta)  \\= 
(z-\lambda) \int_\CC  \frac{G(\zeta)\overline{H(\zeta)}}
{(\zeta-\lambda)(z-\zeta)} 
\,e^{-h(\zeta)}\,dm_2(\zeta)
-\int_\CC  \frac{G(\zeta)\overline{H(\zeta)}}{\zeta-\lambda} 
\,e^{-h(\zeta)}\,dm_2(\zeta) 
\\=(z-\lambda) \int_\CC  \frac{\psi %e^{-h}
}{z-\cdot} 
\,dm_2-\int_\CC  \psi\,dm_2.
\end{multline*}
%and $G(z)\overline{H(z)} (\lambda - z)^{-1} e^{-h(z)} \in L^1(\mathbb{C})$. 
Thus, %by Lemma \ref{le4}
$$
A_2=P\int_\CC \frac{G(\zeta)\overline{H(\zeta)}}{\cdot-\zeta}\,e^{-h(\zeta)}\,dm_2(\zeta) \in 
L^2(e^{-h}dm_2). 
$$
Finally, $Q\in \mathcal F_h$, which proves Theorem~\ref{th1}.\hfill \qed

\begin{remark}
{\rm Let us show how to prove Young's theorem on reproducing kernels
in the Paley--Wiener space $PW_\pi$ 
(the Fourier image of $L^2(-\pi, \pi)$) using a similar idea. Note that $PW_\pi$
does not contain polynomials. Instead we use the fact that
the family $\big\{\frac{\sin\pi z}{\pi (z-n)}\big\}_{n\in\mathbb{Z}}$ 
is an orthonormal basis in $PW_\pi$. Set
$$
S(z) = \frac{\sin\pi z}{z(z^2-1)}, \qquad \mathcal{Z} = \mathbb{Z} 
\setminus \{-1, 0,1\}.
$$
It is not too difficult to verify that the system 
$\big\{\frac{S(z)}{z-\mu}\big\}_{\mu\in\mathcal{Z}}$ is complete in $PW_\pi$.
Now assume that $\{k_\lambda\}_{\lambda\in\Lambda}$ is a complete and 
minimal system of reproducing kernels in $PW_\pi$ 
and $\{g_\lambda\}_{\lambda\in\Lambda}$ is its biorthogonal system
given by \eqref{bior}. Without loss of generality we may assume that
$\Lambda\cap\mathcal{Z} = \emptyset$. If $H$ is orthogonal to 
$\{g_\lambda\}_{\lambda\in\Lambda}$, we define
$$
Q(z) = \int_{\mathbb{R}} \frac{G(z)S(t) - G(t)S(z)}{t-z} \overline{H(t)} dt.
$$
As before, $Q$ vanishes on $\Lambda$. Using standard estimates for the 
functions in the Paley--Wiener space (e.g., the fact that $f$ and $f'$
are bounded on $\mathbb{R}$ for $f\in PW_\pi$), we conclude that 
$Q\in PW_\pi$ and, hence, $Q\equiv 0$.
Thus, $H$ is orthogonal to the system 
$\big\{\frac{S(z)}{z-\mu}\big\}_{\mu\in\mathcal{Z}}$ 
and, hence, is zero. }
\end{remark}

\end{document}